\newcommand{\ok}{$\omega$-categorical}
\newcommand{\aut}{\operatorname{Aut}}
\newcommand{\ba}{\mathrm{BA}}
\newcommand{\R}{\mathcal{R}}
\newcommand{\ol}{\overline}
\theoremstyle{plain}
\newtheorem{theorem}{Theorem}
\newtheorem{lemma}[theorem]{Lemma}
\newtheorem{proposition}[theorem]{Proposition}
\newtheorem{corollary}[theorem]{Corollary}
\theoremstyle{definition}
\newtheorem{definition}[theorem]{Definition}
\newcommand{\ftwo}{\mathbb{F}_2^{\lambda}}
\newcommand{\ff}{\mathbb{F}}
\newcommand{\gl}{\operatorname{GL}}
\newcommand{\sym}{\operatorname{Sym}}
\newcommand{\id}{\operatorname{id}}
\newcommand{\codim}{\operatorname{codim}}
\newcommand{\RM}{\operatorname{RM}}
\newcommand{\symdif}{\mathbin{\triangle}}
\newcommand{\almostsum}{\overline{\Sigma}}
\theoremstyle{remark}
\newtheorem{remark}[theorem]{Remark}
\begin{document}

\title{ Infinitely many reducts of homogeneous structures}

\author[B. Bodor]{Bertalan Bodor }
\address{E\"{o}tv\"{o}s Lor\'{a}nd University, 
          Department of Algebra and Number Theory,
         1117 Budapest,  P\'{a}zm\'{a}ny P\'{e}ter s\'{e}t\'{a}ny 1/c, Hungary}
\email{bodorb@cs.elte.hu}

\author[P. J. Cameron]{Peter J. Cameron}
\address{School of Mathematics and Statistics, University of St Andrews,
North Haugh, St Andrews, Fife KY16 9SS, UK}
\email{pjc20@st-andrews.ac.uk}

\author[Cs. Szab\'{o}]{Csaba Szab\'{o}}
\thanks{The research was supported by the Hungarian OTKA K109185 grant.}
\address{E\"{o}tv\"{o}s Lor\'{a}nd University,
         Department of Algebra and Number The\-o\-ry,
         1117 Budapest,  P\'{a}zm\'{a}ny P\'{e}ter s\'{e}t\'{a}ny 1/c, Hungary}
\email{csaba@cs.elte.hu}

\date{}

\begin{abstract}  It is shown that the countably infinite dimensional pointed vector space (the vector space equipped with a constant) over a finite field has infinitely many first order definable reducts. This implies that the countable homogeneous Boolean-algebra has infinitely many reducts. Our construction over the
$2$-element field is related to the Reed--Muller codes.
 
\end{abstract}

\maketitle

\section{Introduction}

We consider structures over finite relational languages (for short,
\emph{relational structures}). A relational structure $\Gamma$
is a \emph{reduct} of the structure $\Delta$ if they have the same domain
and every relation of $\Gamma$ has a first-order definition in
$\Delta$. Two structures are called first-order equivalent if each is
a reduct of the other. The relation ``is a reduct of'' is clearly transitive,
and so induces a partial order on the class of structures on a given domain
over a given language.

A relational structure $\Delta$ is said to be \emph{homogeneous} if any
isomorphism between finite induced substructures can be extended to an
automorphism of $\Delta$. Note that a homogeneous structure has the property
that the number of isomorphism types of $n$-element substructure it contains
is bounded above by the exponential of a polynomial in $n$.

A structure $\Delta$ is \emph{$\omega$-categorical} if it is (up to isomorphism)
the unique countable model of its first-order theory, or equivalently, if its
$n$-types coincide with the orbits of its automorphism group on $n$-tuples.
Thus, for example, a homogeneous structure is $\omega$-categorical. If
$\Delta$ is $\omega$-categorical, then there is a bijection between reducts of
$\Delta$ and closed overgroups of $\aut(\Delta)$ in the symmetric group on the
domain (with the topology of pointwise convergence), see \cite{Hodges}. Thus,
finding reducts of $\Delta$ is equivalent to finding closed overgroups of
$\aut(\Delta)$. (Note that the ordering of
reducts is the reverse of the ordering of closed overgroups.)

Simon Thomas~\cite{thomas1} made the intriguing conjecture that a countable
homogeneous structure over a finite relational language has only finitely many
inequivalent reducts. This conjecture has been verified for
several well-known homogeneous structures, but there seems to be no
general progress towards proving it.  In \cite{cam} the reducts of the
dense linear order $(\mathbb Q,\leq)$ are determined (though the language of
reducts is not used there). Thomas himself determined the reducts of the
random graph (the unique countable homogeneous universal graph) and of random
hypergraphs~\cite{thomas1,thomas2}. Curiously, both the random graph
and the dense linear order have $5$ reducts.  In \cite{JUZI} it is shown that
the ``pointed'' linear order has 116 reducts. Thus adding a constant to
$(\mathbb Q,\leq)$, the number of reducts can increase significantly. The
Henson graphs (the countable homogeneous universal $K_n$-free graphs for
$n\ge3$~\cite{henson}) have no nontrivial reducts \cite{thomas1}. Later, in
\cite{BPT} and \cite{pibo}, a general technique was introduced to
investigate first order definable reducts of homogeneous structures on
a finite language. Although the strategy works only under very special
conditions, it was possible to determine all reducts in
some cases. Applying these techniques several structures have been
analyzed: the random poset \cite{pppp}, \cite{ppppsz}, 
and the random graph revisited \cite{BP}. They all have
finitely many reducts.  For the pointed Henson graphs $(H_n,C)$, Pongr\'acz
\cite{pong1} showed that for $n=3$ there are 13, while for $n>3$ there
are 16, reducts of $(H_n,C)$. The proof of this highly non-trivial result
requires all known tricks and techniques. Similarly, in \cite{r42}, the 42 
proper  reducts of the ordered random graph are determined in 42 pages.
 In \cite{r42} it is mentioned that we do
not even know how to show that the lattice of reducts has only
finitely many atoms, or no infinite ascending or descending chains.

In order to learn more, it seems to be unavoidable to test the conjecture for
more of the classical structures from model theory, independently of
whether or not we believe the conjecture. We note that the result of
Ahlbrandt and Ziegler \cite[Theorem 3.5]{AZ} addresses the same issue.

The countable dimensional vector spaces over finite fields and the countable
atomless Boolean algebra are \ok. They are not  homogeneous on a
finite relational language -- for vector spaces this can be seen by noting that,
if the maximum arity of relations in the language is $n$, then $n+1$ linearly
independent vectors and $n+1$ vectors with sum zero and all proper subsets
independent are isomorphic as substructures but not of the same type.
However, they are of finite signature, and the vector spaces share with
homogeneous structures the property that the number of $n$-types is bounded by
the exponential of a polynomial in $n$. The first-order
definable reducts of the countable vector space and the symplectic space over 
$\ff_2$ were determined in \cite{kebesza}. There are finitely many of them.
The proper reducts of the vector space  are the affine space and the stabilizer
of $0$ in the symmetric group. In addition the symplectic space, the vector
space endowed with a symplectic form $\cdot$ has one additional reduct, the
vector space with the ternary relation $$\{(a,b,c)\mid a\cdot b+a\cdot c+b\cdot c=0, a,b,c\text{ are linearly independent}\}.$$

In this paper we show that the statement of Thomas' conjecture is not true for 
$\omega$-categorical structures of finite signature. We present infinitely many reducts of the pointed vector spaces over finite fields, and of the homogeneous 
Boolean algebra.

Our construction for pointed vector spaces over finite fields can be 
re-formulated in terms of infinite analogues of the Reed--Muller codes
\cite{Muller,Reed}.

We are grateful to the referee for helpful comments, including the reference
to \cite{AZ}.

\section{The construction}\label{construction}

Let $V $ be a countably infinite dimensional vector space over the 2-element field, $\ff_2$ and $0\neq C\in V$. We shall investigate the pointed vector space $(V,C)$ that is obtained by adding $C$ as a constant to $V$. The automorphism group of $V$ is $\aut(V)= \gl(V)$ and the automorphism group of $(V,C)$ is the stabilizer of $C$ in $\gl(V)$ so $\aut(V,C) =\gl(V)_C$.

We are going to consider mappings which interchange the two elements of some cosets of $\langle C \rangle$. In order to specify this, we fix a 1-codimensional subspace $W<V$ not containing $C$. Then, $V=\langle W, C\rangle$ and $V=\{w, w + C\mid w\in W\}$, so any coset of $\langle C\rangle$ contains a unique vector in $W$. We can specify our maps by giving the set of vectors of $W$ in which the relevant cosets meet $W$.

For $\varphi\in \gl(V)_C$, let $W_{\varphi}=W^{\varphi}\cap W$. We have the following two cases:
\begin{itemize}
\item
If  $W_\varphi=W^\varphi=W$, then $(w+C)^\varphi=w^\varphi+C$ for every $w\in W$. In this case, let $\ol\varphi=\varphi$.
\item   
Otherwise,  $V=W_\varphi \cup (W\setminus W_\varphi) \cup (W^\varphi\setminus W_\varphi) \cup (V\setminus(W\cup W^\varphi)  $. In this case, let $ \ol \varphi $ be defined in the following way: 
\begin{enumerate}
\item[--] $v^{\ol\varphi}=v^\varphi$ if $v^\varphi \in  W_\varphi  \cup (W_\varphi+C)  $ and  
\item[--]$v^{\ol\varphi}=v^\varphi+C$ otherwise.
\end{enumerate}
\end{itemize}
The map $\ol\varphi$ is an automorphism of $(V,C)$ mapping $W$ to $W$. The map $\ol \varphi $ is uniquely determined by $\tilde \varphi= \ol \varphi|_W $. For $\sigma \in \aut(W)$ let $\sigma^V$ denote this extension:

\begin{enumerate}
\item[--] $v^{\sigma^V}=v^\sigma$ if $v\in  W  $ and  
\item[--]$v^{\sigma^V}=(v+C)^\sigma+C$ if $v\notin  W  $   
\end{enumerate}

Now, let $U<W$ such that $U$ has codimension $2$ in $V$ (that is, $|V:U|=4$), and let $\sigma\in \aut W$. Then there is a unique $\sigma_U \neq \sigma^V \in \aut (V,C)$ defined by
\begin{enumerate}
\item[--]$v^{\sigma_U}=v^{\sigma^V}+C$ if $v^{\sigma^V} \in (W\setminus U) \cup(W\setminus U+C)  $.
\item[--] $v^{\sigma_U}=v^{\sigma^V}$ otherwise.
\end{enumerate}

\begin{definition}\label{flip} 
Let $S\subseteq W$. Let $h_S\in \sym(V)$ defined by 
\[
v^{h_S}=
\begin{cases}
v+C & \text{if $v\in S\cup (S+C)$},\\
v & \text{otherwise}.
\end{cases}
\]
If $v\in S\cup (S+C)$ then we say that \emph{$h_S$ flips $v$}.
\end{definition}

Set $H=\{h_S\mid S\subseteq W\}$. Now $H$ is a subgroup of $\sym(V)$ isomorphic to $Z_2^W$. (This follows immediately from Lemma~\ref{calc}(dif) below.)
With the above notations, $\sigma_U=\sigma^V\circ h_{W\setminus U} $. As noted, the subgroup $H$ is an elementary Abelian 2-group (in particular, it is commutative), and it is normalized by $\aut(W)^V:=\{\sigma^V\mid \sigma\in \aut(W)\}$. So there is a canonical embedding
$ \aut (W) \rtimes Z_2^W \to  \langle \aut(V,C), H \rangle  $ given by $(\sigma,h)\to \sigma^V\circ h $. Moreover, with the above notations we have
$\varphi=\ol\varphi\circ h_{W\setminus W_\varphi} $ for every
$\varphi\in \aut (V,C)$.

\begin{definition}
Given a subspace $W_n$ of codimension $n$ in $W$, let $H_n=\langle \aut(V,C), h_{W_n}\rangle$. 
\end{definition}

Since $\aut(V,C)$ acts transitively on $n$-codimensional subspaces of $W$, we see that $H_n$ does not depend on $W_n$, only on the codimension $n$.

\begin{lemma}[Subspace calculus]\label{calc} 
Let $n$ be a natural number.
\begin{enumerate}

\item[(dif)]\label{dif} Let $X,Y\subseteq W$. Then $h_X\circ h_Y = h_{X\symdif Y}$, where $\symdif$ denotes the symmetric difference of subsets.
\item[(gen)]\label{gen} For every $h\in H_n\cap H$ there are $W_i< V$ ($i=1,2,\dots,k$) such that
$h=\prod h_{W_i}$.
\item[(aff)]\label{affin} If $W'<W $, $\dim(W/W')=n$ and $w\in W$, then $h_{W'+w}\in H_n$. 
 
\item[(trans)] If $h_X\in H_n$ and $w\in W$, then $h_{X+w}\in H_n$.

\item[(codim)]\label{codim} If  $w\in W'<W $ and $\dim(W/W')<n$,  then $h_{W'}, h_{W'+w}\in H_n$.
\item[(two)] If $a, b\in U<W $ and $\dim U=n+1$, then there is an element $g\in H_n$ such that $g|_U= h_{\{a,b\}}|_U$.

\item[(even)] If $S\subseteq  U<W $, where $|S|$ is divisible by 2 and $\dim U=n+1$, then there is an element $g\in H_n$ such that $g|_U= h_{S}|_U$.
\item[(one)]  If $a\in U<W $ and $\dim U=n$, then there is an element $g\in H_n$ such that $g|_U= h_{\{a\}}|_U$.
\item[(odd)]  If $S\subseteq U<W $ and $\dim U=n$, then there is an element $g\in H_n$ such that $g|_U= h_{S}|_U$.

\end{enumerate}

\end{lemma}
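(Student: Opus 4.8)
The plan is to reduce everything to the structure of the abelian group $H$ together with a single ``coset trick,'' and then read the remaining claims off a dimension count.

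First, (dif) is a direct computation: composing $h_X$ and $h_Y$ flips the pair over a vector exactly when that vector lies in precisely one of $X,Y$, so $h_X\circ h_Y=h_{X\nabla Y}$ and $(H,\circ)$ is elementary abelian; consequently every element of $H_n\cap H$ is a $\nabla$-combination of any generating set we choose. To describe $H_n\cap H$ I use the semidirect decomposition established above: $H\trianglelefteq H_n\subseteq \aut(\ftwo)^V\ltimes H$, and the projection onto the first factor is onto while it sends $h_{W_n}$ to $\id$. Hence $H_n\cap H$ is the $\aut(\ftwo)^V$-submodule of $H$ generated by $h_{W_n}$ together with the flips lying in $\aut V$. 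Conjugation acts on $H$ through the linear action $h_S\mapsto h_{\sigma(S)}$, and taking $\sigma=\id$ in the construction of $\sigma_U$ exhibits the flips in $\aut V$ as exactly the affine-hyperplane flips. Since $\gl(\ftwo)$ is transitive on codimension-$n$ subspaces and on affine hyperplanes, I obtain
\[ H_n\cap H=\langle\, h_U:\codim U=n\,\rangle+\langle\, h_A: A\ \text{an affine hyperplane}\,\rangle \]
(span under $\nabla$). Claim (gen) is then immediate: each $h_U$ is a subspace flip, and each affine hyperplane flip equals $h_{\ftwo}\circ h_{U_0}$ for the parallel linear hyperplane $U_0$, a product of two subspace flips of codimension at most $n$.

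The engine for the rest is (aff). Given a codimension-$n$ subspace $U$ and $w\notin U$, set $U'=U+\langle w\rangle$ and pick a hyperplane $D$ of $U$, so that $U'/D\cong\mathbb{F}_2^2$ and the three codimension-$n$ subspaces $U=U_1,U_2,U_3$ lying between $D$ and $U'$ are the preimages of the three lines of $U'/D$. A one-line check in $U'/D$ gives $\mathbf{1}_{U+w}=\mathbf{1}_{U_2}+\mathbf{1}_{U_3}$, that is $h_{U+w}=h_{U_2}\circ h_{U_3}$; as $U_2,U_3$ are codimension-$n$ linear subspaces, both factors lie in $H_n$ by the orbit description, proving (aff). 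I expect this to be the main obstacle, since it is the only genuinely non-formal step and every later claim relies on having all affine codimension-$n$ flips available inside $H_n$. Claims (codim) and (trans) then follow formally: by downward induction on codimension, all affine flats of codimension $n$ have their flips in $H_n$ (linear ones by conjugation, translates by (aff)), and an affine flat of codimension $k-1$ splits as a disjoint union of two affine flats of codimension $k$, so $h_A=h_{A_1}\circ h_{A_2}$; running $k$ from $n$ downward puts every affine flat of codimension at most $n$ inside $H_n$, which includes (codim). For (trans), write $h_X=\prod_i h_{W_i}$ with $\codim W_i\le n$ by (gen); then $h_{X+w}=\prod_i h_{W_i+w}$ is a product of affine flats of codimension at most $n$, hence lies in $H_n$.

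Finally, (one), (two), (odd), (even) come from intersecting a codimension-$n$ affine flat with the small subspace $U$, noting that $h_A|_U$ depends only on $A\cap U$ and that affine codimension-$n$ flips lie in $H_n$. For (one), with $\dim U=n$, take a complement $B$ of $U$ (so $\codim B=n$) and form $a+B$; then $(a+B)\cap U=\{a\}$, so $h_{a+B}|_U=h_{\{a\}}|_U$, and composing single-point flips gives (odd). For (two), with $\dim U=n+1$, extend $a+b$ to a basis of $U$ and set $A_0=\langle a+b\rangle\oplus B$ for a complement $B$ of $U$; then $\codim A_0=n$ and $(a+A_0)\cap U=\{a,b\}$, so $h_{a+A_0}|_U=h_{\{a,b\}}|_U$, and composing gives (even). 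The contrast between the two groups of claims is exactly the dimension count: a codimension-$n$ subspace can meet $U$ trivially precisely when $\dim U\le n$, so a single point is realizable only in dimension $n$, whereas in dimension $n+1$ every such flat cuts $U$ in an even-sized set, forcing the parity restriction.
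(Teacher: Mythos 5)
Your proposal is correct and takes essentially the same route as the paper's own proof: the identity $h_{U+w}=h_{U_2}\circ h_{U_3}$ for (aff), the decomposition of lower-codimension flats into disjoint cosets for (codim) and (trans), and the trick of intersecting an affine codimension-$n$ flat with $U$ (then composing flips) for (one), (two), (odd), (even) are exactly the paper's arguments. Your semidirect-product analysis for (gen) merely spells out what the paper dismisses as ``obvious from the definition of $H_n$ and the fact that $H$ is normalized by $\aut(V)$,'' which is a welcome, but not essentially different, elaboration.
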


\begin{proof} If we compose the two group elements, the elements of the intersection of $X$ and $Y$ are flipped twice, hence are fixed by the composition.  The elements of $X\setminus Y$  and $Y\setminus X$ are flipped once. This gives (dif).

Item (gen) is obvious from the definition of $H_n$ and the fact that the group $H$ is normalized by $\aut(V,C)$.

For {(aff)}  if $w\in W'$ then   $h_{W'+w}=h_{W'}$ and we are done. If $w\notin W'$, then  let  $W'=\langle W_1, a\rangle$, where $a\notin W_1$. Then $\langle W_1, w\rangle$ and $\langle W_1, a+w\rangle$ are $n$-codimensional subspaces of $W$. Thus $h_{\langle W_1, w\rangle}$ and $h_{\langle W_1, a+w\rangle}\in H_n$. Now, $h_{\langle U_1, w\rangle}\circ h_{\langle W_1,a+w\rangle}=h_{W'+w}$.

For $m>n$ every $n$-codimensional subspace is the disjoint union of $m$-codimensional affine subspaces. Hence ({codim)} follows from  (aff) and (dif).

By (gen) let $h_X=\prod h_{W_i}$. Then by item (aff) $h_{W_i+w}\in H_n$ and $h_{X+x}=\prod h_{W_i+x}$ thus (trans) follows.

For (two) let $W'$ be an $n$-codimensional subspace of $W$ such that $U\cap W'=\{0,a+b\}$. Then $h_{W'+a}$ satisfies the conditions.

Item (even) easily follows from (two).

For (one) let $W'$ be an $n$-codimensional subspace of $V$ such that $U\cap W'=\{0\}$. Then $h_{W'+a}$ satisfies the conditions.

For (odd) let $S\subseteq U$ and $g=\prod\limits_{a\in S} h_{U+a}$, where $U$ is the subspace from (one). Then $g|_U=h_S|_U$.

\end{proof}

\begin{definition}\label{rndef} 
Let $\R_n$ denote the relation consisting of all $2^n$-tuples
$(x_1,x_2,\dots,x_{2^{n}})$ such that $\{ x_i, x_i+C\,|\, 1\leq i\leq 2^{n}\}$ 
is an affine subspace of $V$ and $|\{ x_i |\, 1\leq i\leq 2^{n}  \} \cap W|$
is even.
\end{definition}

\begin{proposition}\label{rnfix} 
Let $n\geq 1$.  Then $H_n$ preserves $\R_m$ if and only if $m\geq n+1$.
\end{proposition}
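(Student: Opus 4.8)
The plan is to pass to the quotient $\ol V=V/\langle C\rangle$ and to reduce the statement to the two generators of $H_n$. Fix the linear functional $\pi\colon V\to\ff_2$ with kernel $W$ and $\pi(C)=1$, so $x\in W$ exactly when $\pi(x)=0$. In a tuple of $\R_m$ the $2^m$ cosets $\{x_i,x_i+C\}$ are distinct and their union is an $(m+1)$-dimensional affine subspace $A$; since $2^m$ is even (as $m\ge 1$), the requirement that $|\{x_i\}\cap W|$ be even is the linear condition $\pi(\sum_i x_i)=0$. As the permutations $g$ with $g(\R_m)=\R_m$ form a subgroup of $\sym(V)$ and $H_n=\langle\aut(V,C),h_{W_n}\rangle$, it is enough to test the two generators; since $h_{W_n}$ is an involution and $\aut(V,C)$ is a group, for each it suffices to show that $\R_m$ is mapped into itself.

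First I would check that every $\varphi\in\aut(V,C)=\gl(V)_C$ preserves $\R_m$ once $m\ge 2$. Being linear and fixing $C$, $\varphi$ permutes the $\langle C\rangle$-cosets and carries $(m+1)$-dimensional affine subspaces to the same, so condition (i) is kept. For condition (ii) the crucial point is that $\sum_i x_i\in\langle C\rangle$ as soon as $m\ge 2$: writing $A=A_0+a$ with $\langle C\rangle\le A_0=\langle C\rangle\oplus B$ and summing one representative per coset, the term $2^m a$ vanishes and $\sum_i x_i$ equals $\sum_{b\in B}b$ up to a multiple of $C$, while $\sum_{b\in B}b=\ul 0$ precisely because $\dim B=m\ge 2$. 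Since $\varphi$ fixes $\langle C\rangle$ pointwise, $\pi(\sum_i x_i)$ is unchanged and (ii) is preserved. (For $m=1$ one has $\sum_{b\in B}b\neq\ul 0$, and $\aut(V,C)$ does not even preserve $\R_1$; this is exactly why the threshold is $n+1\ge 2$.)

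Next I would analyse $h_{W_n}$ in $\ol V$. As $h_{W_n}$ only replaces certain $x_i$ by $x_i+C$, it fixes every coset setwise and leaves $A$ unchanged, so (i) holds automatically. Its effect on (ii) is to flip $\pi(x_i)$ exactly for those $i$ whose coset lies in $\ol{W_n}$, the image of $\langle W_n,C\rangle$, which has codimension $n$ in $\ol V$. Hence $h_{W_n}$ preserves $\R_m$ if and only if $|\ol A\cap\ol{W_n}|$ is even for every $m$-dimensional affine flat $\ol A\le\ol V$. A nonempty intersection of an $m$-flat with a codimension-$n$ flat is a flat of dimension at least $m-n$; so for $m\ge n+1$ it is empty or of positive dimension, hence of even size, whereas for $m\le n$ one can (since $m\le n$) choose $\ol A$ whose direction meets that of $\ol{W_n}$ trivially and which passes through a point of $\ol{W_n}$, giving a single intersection point and odd size. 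Thus $h_{W_n}$ preserves $\R_m$ exactly when $m\ge n+1$.

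Putting the pieces together proves the proposition: if $m\ge n+1$ then also $m\ge 2$, so both generators preserve $\R_m$ and therefore so does $H_n$; if $m\le n$ then $h_{W_n}\in H_n$ already fails, so $H_n$ cannot preserve $\R_m$. I expect the main obstacle to be the bookkeeping around condition (ii): identifying it with the linear condition $\pi(\sum_i x_i)=0$, isolating the $m=1$ versus $m\ge 2$ dichotomy that comes from the vanishing of $\sum_{b\in B}b$, and verifying that $\ol{W_n}$ has codimension exactly $n$ in $\ol V$ so that transversality yields the threshold $n+1$ rather than $n$. Degenerate tuples, where some cosets coincide and $\dim\ol A<m$, should be treated separately, but shrinking $\ol A$ only shrinks the intersection and does not affect the conclusion.
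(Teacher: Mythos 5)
Your proof is correct under the same reading of Definition~\ref{rndef} that the paper itself relies on (the $2^m$ cosets $x_i+\langle C\rangle$ are pairwise distinct, so their union is an $(m+1)$-dimensional affine subspace), and for the ``preserved'' direction it takes a genuinely different route. The paper handles an automorphism $\varphi\in\aut(V,C)$ through the canonical decomposition $\varphi=\ol\varphi\circ h_{W\setminus W_\varphi}$, which forces it to invoke Lemma~\ref{calc} (items (gen), (aff), (codim)) to place the correction factor inside $H_n$, and then to prove that $h_{W_1}$ preserves $\R_m$ for \emph{every} $n$-codimensional $W_1\le W$, via the symmetric-difference count $|S^h\cap W|=|S\,\nabla\,(W_1\cap U)|$. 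You bypass all of that: your identification of the parity condition with the linear condition $\sum_i x_i=\ul 0$ (valid for $m\ge 2$ because the sum of all elements of a subspace of dimension at least $2$ vanishes) makes $\gl(V)_C$-invariance immediate, and since the setwise stabilizer of $\R_m$ in $\sym(V)$ is a group, testing the two kinds of generators of $H_n$ suffices--no conjugates, no Lemma~\ref{calc}. Your analysis of $h_{W_n}$ (the parity changes by $|\ol A\cap\ol{W_n}|\bmod 2$, and a nonempty intersection of an $m$-flat with a codimension-$n$ subspace has positive dimension, hence even cardinality, once $m\ge n+1$) is the same dimension count as the paper's, merely phrased in the quotient $\ol V=V/\langle C\rangle$; likewise your transversal flat for $m\le n$ is exactly the paper's $U_m$ with $U_m\cap W_n=\{\ul 0\}$ (for the failure you should note explicitly, as the paper does, that choosing all representatives inside $W$ produces a tuple of $\R_m$ witnessing the violation). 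Net effect: a shorter, self-contained argument, at the price of stating and proving the $\sum_i x_i=\ul 0$ reformulation.

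One sentence does need to be repaired: your closing claim that degenerate tuples ``do not affect the conclusion'' is false. If tuples with coinciding cosets were admitted into $\R_m$, the proposition itself would fail: for $x\notin W$ with $x+C\in W_1$, the tuple $(x,x,x,x)$ would lie in $\R_2$ (the set $\{x,x+C\}$ is an affine subspace and $|\{x\}\cap W|=0$), yet $h_{W_1}\in H_1$ sends it to $(x+C,x+C,x+C,x+C)$ with $|\{x+C\}\cap W|=1$, which is not in $\R_2$. This is precisely the odd intersection of a $0$-dimensional flat with $\ol{W_1}$ that your own transversality criterion flags, so shrinking $\ol A$ does \emph{not} keep the intersection even. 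The resolution is that degenerate tuples must be excluded from the relation altogether--as your opening sentence does, and as the paper does implicitly--not ``treated separately''.
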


\begin{proof} 
Assume that $m\leq n$. Let $W_{\ol n}$ be an $n$-codimensional and $U_m$ be an $m$-dimensional subspace of $W$ such that $W_{\ol n}\cap U_m=\{0\}$. Such a pair of subspaces exists by the conditions on the dimensions. Let $x_1,\dots, x_{2^m}$ be an enumeration of the elements of $U_m$. Clearly, $(x_1,\dots, x_{2^m})\in \R_m$. Let $g=h_{W_n}$. Now, $g\in H_n$ and $v^g=v+C$ holds only for $0$ from $U_m$ and for $0\neq  v\in U_m$ we have $v^h=v$. That is $(x_1^g,\dots, x_{2^m}^g)$ contains $2^m-1$, in particular  odd many elements from $W$ and so $g$ does not preserve $\R_m$.

For the other direction suppose $m\geq n+1$. Let $\varphi\in \aut (V,C) $ and  consider the  canonical  form $\varphi=\ol\varphi\circ h_{W\setminus W_\varphi} $.  The map $\ol\varphi$ preserves $W$, hence preserves $\R_k$, as well, for arbitrary $k$. By item (codim) of Theorem~\ref{calc}  we have $h_{W\setminus W_\varphi}\in H_n$, hence it is enough to show that $H_{W_1}$ preserves $\R_m$ for every $n$-codimensional subspace $W_1$ of $W$.

So suppose $(x_1,\dots, x_{2^m})\in \R_m$, and let  $S=\{x_1,\dots, x_{2^m}\}$. Then $S\cup (S+C)$ is an affine subspace of $V$ and  either $x_i\in W$ or $x_i+C\in W$. Let $S_C =\{x_i\,|\, x_i\notin W  \}$. Now, $U=(S_C+C)\cup (S\setminus S_C)$  is an $n$-dimensional affine subspace of $W$ and $S=U^{h_{S_C}}$, that is $S$ is obtained from the affine subspace $U$ by applying $h_{S_C}$.  Also, by definition of $\R_m$ we have that $|S\cap U|=|S\cap W|$ is even. Now, let $W_1$ be an arbitrary $n$-codimensional subspace of $W$ and $h=h_{W_1}$. Then $\dim(W_1\cap U)>1$, hence $|W_1\cap U|$ is even. Moreover,   $S^h=U^{h_{S_C}h} $ so by item (dif) of Theorem~\ref{calc} we have that $|S^h\cap W|=|S^h\cap U|=|U^{h_{S_C}h} \cap U|=|U^{h_{S_C}}\symdif (W_1\cap U)|=|S\symdif (W_1\cap U)|$ is even. We have $\{x_i,x_i+C\}=\{x_i^h,x_i^h+C\}$, hence 
$(x_1^{h},\dots, x^{h}_{2^m})\in \R_m$. Thus $h_{W_1}$ preserves $\R_m$ and by item (gen) of Lemma~\ref{calc} $H_{n}$ preserves $\R_m$, as well.

\end{proof}

\begin{proposition}\label{chain}
For the closure of the subgroups $H_i $ we have $\ol H_1\lneq \ol H_2\lneq \cdots\lneq \ol H_n\lneq \dots$ 
\end{proposition}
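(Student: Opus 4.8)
The plan is to deduce the entire chain from the two results already in hand: the invariance dichotomy of the relations $\R_m$ recorded in Proposition~\ref{rnfix}, together with the containments supplied by item (codim) of Lemma~\ref{calc}. The one background fact I use is the standard Galois correspondence for permutation groups: a subgroup $G\leq\sym(\ftwo)$ and its closure $\ol G$ in the topology of pointwise convergence have exactly the same invariant relations. Indeed, if $G$ preserves a relation $R$ then $\aut(\ftwo,R)$ is a closed group containing $G$, hence containing $\ol G$, so $\ol G$ preserves $R$ as well; the converse is trivial since $G\subseteq\ol G$. Consequently, to separate $\ol H_n$ from $\ol H_{n+1}$ it suffices to name a single relation preserved by one of the two groups but not the other.

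For the inclusions $\ol H_n\subseteq\ol H_{n+1}$ I would first establish $H_n\subseteq H_{n+1}$ and then invoke monotonicity of the closure operator. Both $H_n$ and $H_{n+1}$ contain $\aut(\ftwo)$ by definition, so it is enough to check that the additional generator $h_{W_n}$ of $H_n$, where $\codim W_n=n$, already lies in $H_{n+1}$. This is immediate from item (codim) of Lemma~\ref{calc}: taking the parameter there to be $n+1$, the hypothesis $\codim W_n=n<n+1$ is satisfied, so $h_{W_n}\in H_{n+1}$. Hence $H_n\subseteq H_{n+1}$ and therefore $\ol H_n\subseteq\ol H_{n+1}$.

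For strictness I would use $\R_{n+1}$ as the separating invariant. By Proposition~\ref{rnfix} the group $H_n$ preserves $\R_m$ precisely when $m\geq n+1$; in particular $H_n$ preserves $\R_{n+1}$, and hence so does $\ol H_n$ by the remark above. Applying the same proposition to $H_{n+1}$ shows that $H_{n+1}$ preserves $\R_m$ only for $m\geq n+2$, so $H_{n+1}$ does \emph{not} preserve $\R_{n+1}$, and therefore neither does $\ol H_{n+1}$. Thus $\R_{n+1}$ is an invariant relation of $\ol H_n$ but not of $\ol H_{n+1}$, which forces $\ol H_n\neq\ol H_{n+1}$. Combined with the inclusion of the previous paragraph this gives $\ol H_n\lneq\ol H_{n+1}$ for every $n\geq 1$, which is exactly the claimed strictly ascending chain.

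Because everything reduces to the two quoted results, there is no genuine difficulty beyond bookkeeping; the only point demanding care is the direction of the inequalities in Proposition~\ref{rnfix}. The larger group $H_{n+1}$ preserves \emph{fewer} of the relations $\R_m$, so the separating relation must be taken to be $\R_{n+1}$, the unique index at which the invariant behaviours of $H_n$ and $H_{n+1}$ differ. One should also keep in mind that the closures are formed inside $\sym(\ftwo)$ in the pointwise-convergence topology, so that the correspondence between closed groups and their sets of invariant relations applies verbatim.
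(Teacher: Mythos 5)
Your proof is correct and is essentially the paper's own argument: the inclusions $\ol H_n\subseteq \ol H_{n+1}$ come from item (codim) of Lemma~\ref{calc} applied to the generator $h_{W_n}$, and strictness comes from Proposition~\ref{rnfix}, since $H_n$ (hence $\ol H_n$) preserves $\R_{n+1}$ while $H_{n+1}$ (hence $\ol H_{n+1}$) does not. Your write-up is in fact a bit cleaner than the paper's, which garbles dimension versus codimension in citing the lemma, but the substance is identical.
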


\begin{proof} By item (dim) of Lemma~\ref{calc} if $U<V$ and $m=\dim U\geq n$ then $h_U\in H_n$. By item (gen) of Lemma~\ref{calc} these elements generate 
$H_m$. Thus $H_m\leq H_n $ and so $\ol H_m\leq \ol H_n $ holds.
 By Theorem~\ref{rnfix} the group $H_n$ preserves the relations  $\R_{n+1}$  and so does its closure, $\ol H_n$. On the other hand $H_{n+1}$ does not preserve $\R_{n+1}$, hence $\ol H_{n+1}$ does not preserve  $\R_{n+1}$, either. Thus $H_n\neq H_{n+1}$ and the statement holds.

\end{proof}

\begin{remark}
	By using the observation that each automorphism $\varphi$ of $(V,C)$ can be written as $\varphi=\ol\varphi\circ h_{W\setminus W_\varphi}$, it is easy to see that $H_1=H_0=\aut(V,C)\cup \aut(V,C)h_W$. In particular $\aut(V,C)$ is a normal subgroup of index 2 in $H_1$.
\end{remark}

\begin{theorem}\label{infred}  The lattice of first-order definable reducts of the pointed homogeneous vector space $(\ff_2^\omega,C )$ contains an infinite descending chain. 
In particular $(\ff_2^\omega,C )$ has infinitely many  first-order definable reducts.
\end{theorem}

\begin{proof} The first-order definable reducts of a homogeneous structure are
in a one-to-one order-reversing correspondence with the closed supergroups of
its automorphism groups. Proposition~\ref{chain} implies the statement.
\end{proof}

Finally, we give the relational description of the groups $H_n$.

\begin{definition}
	We define the ternary relation $\almostsum$ as $$\almostsum(x,y,z):=\bigl\{(x,y,z)\in V^3\mid x+y+z\in \{0,C\}\bigr\}.$$
\end{definition}

\begin{proposition}\label{semidir}
	Let $\pi\in \sym(V)$. Then $\pi\in \langle \aut(V,C), H\rangle=\aut(V,C)H$ if and only if $\pi$ preserves $\almostsum$.
\end{proposition}

\begin{proof}
	The equality $\langle \aut(V,C), H\rangle=\aut(V,C)H$ follows from the fact that the group $H$ is normalized by $\aut(V,C)$.
	
	It is clear that every element of $H$ preserves the relation $\almostsum$. This implies the forward implication.
	
	Now, let us assume that $\pi$ preserves $\almostsum$. Using the notation at the beginning of this section it is easy to check that $\tilde{\pi}=\ol \pi|_W$ is well-defined and it is an automorphism of $W$. Then $\tilde{\pi}^V$ is an automorphism of $(V,C)$ and $(\tilde{\pi}^V)^{-1}\circ \pi=h_S$ for some $S\subseteq W$. In particular $(\tilde{\pi}^V)^{-1}\circ \pi\in H$, and thus $\pi\in \aut(V,C)H=\langle \aut(V,C), H\rangle$.
\end{proof}

\begin{theorem}\label{relational_description} 
Let $\pi\in \sym (V,C)$. Then $\pi\in  \ol{H}_n$ if and only if $\pi $ preserves the relations $\R_{n+1}$ and $\almostsum$.
\end{theorem}

\begin{proof} 
	The group $H_n$ preserves $\R_{n+1}$ by Proposition~\ref{rnfix}, and it preserves $\almostsum$ by Proposition~\ref{semidir}. 
	
	Now, let $\pi$ be a permutation preserving $\R_{n+1}$ and $\almostsum$. We have to show that $\pi\in \ol{H_n}$. By Proposition~\ref{semidir} it follows that $\pi$ can be written as $\pi=\varphi\circ h_S$ for some $\varphi\in \aut(V,C)$ and $S\subseteq W$. As automorphisms of $(V,C)$ preserve the relations $\R_{m}$, we may assume that $\pi=h_S$ for some $S\subseteq W$. 
	
	Let $U\leq V$ be a finite dimensional subspace of $V$. It is enough to show that there is an $h\in H_n$ such that $h|_U=\pi|_U$. Clearly we can assume that $C\in U$. Let $U_1=U\cap W$. If $\dim U_1\leq n$ then item (odd) of Lemma~\ref{calc} proves the statement, $H_n|_U= Z_2^{U_1}$. If $\dim U_1=n+1$ then the statement holds by item (even) of Lemma~\ref{calc}. Now, let $\dim U_1=m$, where $m\geq n+2$. We proceed by induction on $n+m$. 

	First, let us assume that $n=1$. We claim that in this case both $S$ and $W\setminus S$ are affine subspaces of $W$. (This implies that $\codim S=1$, hence $\pi=h_S\in H_1$.) Suppose first that $u,v,w\in S$. Since $\pi$ preserve $\R_2$ it follows that $|S\cap \{u,v,w,u+v+w\}|$ is even. Hence $u+v+w\in S$. Similarly if $u,v,w\in W\setminus S$, then $u+v+w\in W\setminus S$.

	Now, assume that $n\geq 2$, $m\geq n+2$, and the statement holds for $n+m-1$ and $n+m-2$. Let $\dim U_1=m$. Let $U_2\leq U_1\cap W$ such that $\dim U_2=m-1$, and let $w\in U_1\setminus U_2$. By the induction hypothesis there is an $h_{T}\in H_n$ such that ${h_T}|_{U_2}=\pi|_{U_2}$. The permutation $\pi\circ h_{T}^{-1}=h_{S\symdif T}$ fixes $U_2+C$ elementwise, and it preserves $\R_{n+1}$. Let $k=h_{(S\symdif T)+w}$. Then $k$ preserves $\R_{n+1}$ by the definition of $\R_{n+1}$, and it fixes $U_1\setminus U_2$ elementwise. By using item (trans) of Lemma~\ref{calc} it is enough to show that there is an element $h\in H_n$ such that $h|_{U_1}=k|_{U_1}$. Let $W_2$ be a $1$-codimensional subspace of $W$ containing $U_2$, but not containing $U_1$. We would like to apply the induction hypothesis for the vector space $V_2=\langle W_2,C\rangle$, the subspace $\langle U_2,C\rangle$, the relation $\R_n$ and the permutation $k$. For this it is enough to show that $k$ preserves $\R_n$ restricted to $\langle U_2,C\rangle$. So suppose $x_1,x_2\dots, x_{2^n}\in \langle U_2,C\rangle$ and $(x_1,\dots, x_{2^n})\in \R_n$ and let $X=\{x_1,\dots, x_{2^n}\}$. We have to show that $|X\cap X^k|$ is even. Let $Y=X\cup (X+w)$. Then $|Y\cap Y^k|$ is even since $k$ preserves $\R_{n+1}$. We know that $k|_{U_1\setminus U_2}=\id_{U_1\setminus U_2}$, hence $Y\cap Y^k=(X\cap X^k)\cup ((X+w)\cap (X+w)^k))=(X\cap X^k)\cup (X+w)$, therefore $|X\cap X^k|=|Y\cap Y^k|-|X+w|=|Y\cap Y^k|-2^n$, which is even. So we can apply the induction hypothesis for the vector space $V_2$, the subspace $\langle U_2,C\rangle$, the relation $\R_n$ and the permutation $k$. It implies that there are $n-1$-codimensional subspaces $Y_1,Y_2,\dots ,Y_t$ of $W_2$ such that for $h=\prod h_{Y_i}$ we have that $h|_{U_2}=k|_{U_2}$. The subspaces $Y_i$ are $n$-codimensional subspaces of $V$, hence $h\in H_n$. Moreover $h$ fixes all elements of $U_1\setminus U_2\subset W\setminus W_2$, hence $h|_{U_1}=k|_{U_1}$, and this is what we wanted to show.
\end{proof}

\section{Reed--Muller codes}

Our construction can be re-formulated in terms of infinite analogues of
Reed--Muller codes \cite{Muller,Reed}.

Our description of the Reed--Muller codes follows van Lint \cite{vanLint}.

A \emph{binary linear code} of length $N$ is a vector subspace of
$\mathbb{F}_2^N$. Vectors in this space can be regarded as functions from
an $N$-set to $\mathbb{F}_2$. For this application we take $N=2^n$, and
identify the set of coordinates with $V=\mathbb{F}_2^n$.

The \emph{Reed--Muller code} $\RM(r,n)$ can be described in two different ways:
\begin{enumerate}
\item[--] it consists of all the functions from $V$ to $\mathbb{F}_2$ which can
be represented by polynomials of degree at most $r$ in the coordinates;
\item[--] it is spanned by the characteristic functions of subspaces of
codimension~$r$ in $V$.
\end{enumerate}
We summarise a few properties of these codes.
\begin{enumerate}
\item[--] $\RM(r,n)$ has dimension $\sum_{i=0}^r\binom{n}{i}$ and minimum 
weight $2^{n-r}$;
\item[--] $\RM(r,n)^\perp=\RM(n-r-1,n)$, where orthogonality is with respect to
the standard inner product.
\end{enumerate}

\smallskip

Now we return to our reducts. The automorphism group
of the pointed vector space $(\ftwo,C)$ is a semidirect product of the space
of linear functions $V\to\mathbb{F}_2$ by the general linear
group $\gl(V)$, where $V=\ftwo/\langle C\rangle$. (For this group acts on the
quotient space as the automorphism group of $V$; the kernel of this action 
fixes every coset, and so can be represented by maps from $V$ to $\ff_2$, the
image of a coset being $0$ or $1$ according as the elements in this coset are
fixed or interchanged by the element concerned. To see that the extension
splits, choose a complement $W$ for $\langle C\rangle$ in $\ftwo$; elements of
$\aut(\ftwo,C)$ form the required complement.)
Thus, any closed
$\gl(V)$-invariant subspace $W$ of the space of functions
$\lambda\to\ff_2$ that contains all linear functions will define a closed subgroup $W\rtimes\gl(V)$ containing all automorphisms, and
hence a reduct.

Let $W_k$ be the closure of the vector space of functions
$f:V\to\mathbb{F}_2$ given by polynomials of degree at most $k$
in the coordinates (these are ``infinite RM codes''). Note that, since $x^2$
and $x$ are equal as functions, we have $W_1\le W_2\le\cdots$; these subspaces
are closed and $\gl(V)$-invariant. The inclusions are strict since, for
example, the polynomial of degree $k$ which is the product of $k$ distinct
indeterminates cannot be written as a polynomial of smaller degree. (See also
the following paragraph). So we have
a descending chain of reducts. Note that non-zero vectors in these subspaces
all have infinite support.

While there is no inner product defined on the
vector space of all functions from $V$ to $\mathbb{F}_2$,
we can define the ``standard inner product'' $u\cdot w$ whenever $u$ is a
vector with finite support. Now a function belongs to $W_k$ if and only if it
is orthogonal to the characteristic function of every $(k+1)$-dimensional
subspace of $V$. This holds for polynomials of degree $k$ by the same
argument as in the finite case. Then, as a convergent sequence of
polynomials (in the topology of pointwise convergence) is ultimately constant
on any $(k+1)$-dimensional subspace,  its limit is also orthogonal to every such
subspace. We also see the strict inclusion of the subspaces $W_k$ from this
argument: for the product of $k$ distinct indeterminates meets some
$k$-dimensional subspace in a single point, and so fails to be orthogonal to
all such subspaces, and cannot lie in $W_{k-1}$.

So $W_k\rtimes\gl(V)$ is a closed subgroup of
the symmetric group containing all automorphisms, and hence a reduct of the pointed vector space.

This argument also verifies the relational definition of the reducts given
earlier.

We remark that the two definitions of the finite-dimensional RM codes are no
longer equivalent in the infinite case: the space spanned by the characteristic
functions of $k$-dimensional subspaces contains elements of finite support and
is not closed.

\section{Corollaries}

There are two obvious ways to generalize the result of Theorem~\ref{infred}. One is to find a similar construction for vector spaces over finite fields of odd characteristic; the other is to find structures that have $(V,C)$ as a first-order definable reduct. We start with the second.

Let  $\ba=(B,\wedge,\vee,0,1,\neg)$ denote the countable atomless Boolean algebra. It is known that this structure is {\ok}. For $a,b\in \ba$ let $a + b$  denote the symmetric difference of $a$ and $b$.

\begin{theorem}\label{infredb}  The lattice of first-order definable reducts of the homogeneous Boolean algebra  contains an infinite descending chain. 
In particular it has infinitely many  first-order definable reducts.
\end{theorem}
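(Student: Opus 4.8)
The plan is to reduce Theorem~\ref{infredb} to the already-established chain of reducts for the pointed vector space, by exhibiting $(\ff_2^\omega,C)$ as a first-order definable reduct of $\ba$ and then transporting the chain along the reduct relation. The underlying principle is that the reduct relation is transitive: since first-order definitions compose, a reduct of a reduct is again a reduct, and correspondingly a closed supergroup of $\aut(\ba)$ is obtained whenever we have a closed supergroup of the automorphism group of any reduct of $\ba$. Thus, once I know that $\aut(\ba)\le\aut(\ff_2^\omega,C)$, every member of the chain $\ol H_1\lneq\ol H_2\lneq\cdots$ from Proposition~\ref{chain} will automatically be a closed supergroup of $\aut(\ba)$.

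First I would equip $B$ with the symmetric difference $a+b=(a\wedge\neg b)\vee(\neg a\wedge b)$, which is a quantifier-free term of $\ba$ and hence first-order definable. Then $(B,+,0)$ is an abelian group in which every element satisfies $a+a=0$, so it is a vector space over $\ff_2$; since the atomless algebra $\ba$ is infinite but $B$ is countable, this vector space has countably infinite dimension. As $1\neq 0$, the structure $(B,+,0,1)$ is a pointed $\ff_2$-vector space. Because any two countably infinite-dimensional $\ff_2$-vector spaces are isomorphic and $\gl(V)$ acts transitively on nonzero vectors, we obtain $(B,+,0,1)\cong(\ff_2^\omega,C)$, with the distinguished element $1$ playing the role of the constant $C$.

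Since $+$, $0$ and $1$ are first-order definable in $\ba$, the structure $(B,+,0,1)$ is a first-order definable reduct of $\ba$, whence $\aut(\ba)\le\aut(B,+,0,1)=\aut(\ff_2^\omega,C)$. By Proposition~\ref{chain} the closed subgroups $\ol H_1\lneq\ol H_2\lneq\cdots$ form a strictly ascending chain of closed supergroups of $\aut(\ff_2^\omega,C)$ in $\sym(V)=\sym(B)$, and each therefore also contains $\aut(\ba)$. Hence this is an infinite strictly ascending chain of closed supergroups of $\aut(\ba)$, and by the one-to-one correspondence between closed supergroups of the automorphism group and first-order definable reducts of an \ok structure, it yields an infinite ascending chain in the reduct lattice of $\ba$, and in particular infinitely many reducts.

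The only step requiring genuine care — and the one I would verify most explicitly — is the identification $(B,+,0,1)\cong(\ff_2^\omega,C)$: one must confirm both that $(B,+)$ is a countably infinite-dimensional $\ff_2$-vector space (using that $\ba$ is infinite, together with countability, to pin down the dimension as $\omega$) and that $1$ is a nonzero element, so that it is an admissible choice of constant $C$. Everything beyond this identification is purely formal, following from the transitivity of the reduct relation and the strictness of the chain already proved in Proposition~\ref{chain}; no further combinatorial analysis of the relations $\R_n$ is needed.
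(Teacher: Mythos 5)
Your proposal is correct and follows essentially the same route as the paper: both identify $(B,+,0,1)$, with $+$ the symmetric difference, as a first-order definable reduct of $\ba$ isomorphic to $(\ff_2^\omega,C)$, and then transport the ascending chain of reducts via transitivity of the reduct relation. Your write-up merely makes explicit the details (definability of $+$, the dimension count, the group-theoretic correspondence) that the paper's two-sentence proof leaves implicit.
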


\begin{proof} The vector space $(\ff_2^\omega,C )$ is isomorphic to $(B,+,0,1)$, hence it is   a reduct of the homogeneous Boolean algebra. Being a first-order definable reduct is transitive, so Theorem~\ref{infred} implies the statement.
 \end{proof}

For the case of finite fields of odd characteristic, the construction is analogous.  Let $V $ be a countably infinite dimensional vector space over the $p$-element field, $\ff_p$ and $0\neq C\in V$. Let us fix a 1-codimensional subspace $W<V$ not containing $C$. Then, $V=\langle W,C \rangle$ and $V=\{w, w+\lambda C \,|\, w\in W, \lambda\in\ff_p\}$. The automorphism group of $V$ is $\aut(V)= \gl(V)$ and the automorphism group of $(V,C)$, the pointed vector space is the stabilizer of $C$ in $\gl(V)$ so $\aut(V,C) =\gl(V)_C$.  Let $S\subseteq W$. Let $h_S\in \sym(V)$ defined by $v^{h_S}=v+C$ for $v\in S$ and  $v^{h_S}=v$ else. For  a subspace  $W_n\leq V$, where $W_n$ is an  $n$-codimensional subspace and let $H_n=\langle \aut(V,C), h_{W_n}   \rangle$. Again, $H_n$ does not depend on the choice of $W_n$. Let $\R_n$ denote the relation $(x_1,x_2,\dots,x_{p^{n}})$, where $\{ x_i, x_i+\lambda C\,|\, 1\leq i\leq p^{n}, \lambda\in \ff_p  \}$ is a subspace of $V$ and $\sum x_i \in W$.
As in the case of characteristic 2,  for the closure of the subgroups $H_i $ we have $\ol H_1\lneq \ol H_2\lneq \cdots\lneq \ol H_n\lneq \dots$. Also, for any $\pi\in \sym (V,C)$ we have that $\pi\in  \ol{H}_n$ if and only if $\pi $ preserves $\R_{n+1}$. We arrive at the conclusion:

\begin{theorem}\label{infredp}  The lattice of first-order definable reducts of the pointed homogeneous vector space $(\ff_p^\omega,C )$ contains an infinite descending chain.  In particular $(\ff_p^\omega,C )$ has infinitely many  first-order definable reducts.
\end{theorem}

\begin{corollary}\label{infredpp}  The lattice of first-order definable reducts of the pointed homogeneous vector space over a finite field $\ff_q$ contains an infinite descending chain.  In particular $(\ff_q^\omega,C )$ has infinitely many  first-order definable reducts.
\end{corollary}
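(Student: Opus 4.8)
The plan is to deduce the general finite-field case from the prime-field case of Theorem~\ref{infredp} by restriction of scalars. Write $q=p^d$, where $p$ is the characteristic of $\ff_q$, and let $V$ be a countably infinite dimensional vector space over $\ff_q$. Regarding $V$ as a vector space over the prime subfield $\ff_p\le\ff_q$ (by forgetting all scalar multiplications $\mu_\alpha$ with $\alpha\in\ff_q\setminus\ff_p$), its dimension becomes $d\cdot\omega=\omega$, so as a pointed $\ff_p$-space it is isomorphic to $(\ff_p^\omega,C)$. First I would make precise that this exhibits $(\ff_p^\omega,C)$ as a first-order definable reduct of $(\ff_q^\omega,C)$, and then conclude by transitivity of the reduct relation together with Theorem~\ref{infredp}.

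There are two points to verify, both routine. First, over the prime field every scalar multiplication $\mu_\beta$ with $\beta\in\ff_p$ is first-order definable from $+$ alone, as $\mu_\beta(x)=x+\cdots+x$ with $\beta$ summands; hence the $\ff_p$-space structure together with $C$ is first-order definable in $(\ff_q^\omega,C)$, and the two structures share the domain $V$. Thus $(\ff_p^\omega,C)$ is a reduct of $(\ff_q^\omega,C)$. Second, I would confirm the identification of this reduct by noting that the underlying pointed abelian group $(V,+,C)$ is a countable-rank elementary abelian $p$-group with a distinguished nonzero element; since the automorphism group of such a group acts transitively on its nonzero vectors, $(V,+,C)\cong(\ff_p^\omega,C)$. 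On the level of automorphism groups this is simply the inclusion $\aut(\ff_q^\omega,C)=\gl_{\ff_q}(V)_C\le\gl_{\ff_p}(V)_C=\aut(\ff_p^\omega,C)$.

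To finish, I would recall from Theorem~\ref{infredp} that there is an infinite strictly ascending chain $\ol H_1\lneq\ol H_2\lneq\cdots$ of closed supergroups of $\aut(\ff_p^\omega,C)$, where each $\ol H_n$ is the reduct determined by $\R_{n+1}$. By the inclusion above, every $\ol H_n$ is also a closed supergroup of $\aut(\ff_q^\omega,C)$, and hence corresponds to a first-order definable reduct of $(\ff_q^\omega,C)$; strictness of the chain is automatically preserved, since the $\ol H_n$ are already pairwise distinct as abstract groups. This yields the desired infinite ascending chain in the reduct lattice of $(\ff_q^\omega,C)$.

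I do not expect a genuine obstacle here: the argument is a clean change-of-base-field reduction, and the only subtlety that needs care is the direction of the inclusion of automorphism groups, namely that enlarging the field \emph{shrinks} the automorphism group, so that the chain of closed supergroups survives intact when passed from $(\ff_p^\omega,C)$ up to $(\ff_q^\omega,C)$. The one spot where a careless argument could go wrong is the claim that restriction of scalars yields exactly the pointed structure $(\ff_p^\omega,C)$; making the isomorphism $(V,+,C)\cong(\ff_p^\omega,C)$ explicit, as above, removes this worry.
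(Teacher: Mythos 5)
Your proof is correct and takes essentially the same approach as the paper: restriction of scalars to the prime field identifies $(\ff_q^\omega,C,+)$ with $(\ff_p^\omega,C)$, exhibiting the latter as a first-order definable reduct of the former, and transitivity of the reduct relation together with the prime-field result yields the chain. The only differences are cosmetic --- the paper's proof is three sentences and also cites Theorem~\ref{infred} to cover the case $p=2$ explicitly, whereas you spell out the routine verifications (definability of $\ff_p$-scalars from $+$, the isomorphism type of the pointed elementary abelian $p$-group, and the inclusion $\aut(\ff_q^\omega,C)\le\aut(\ff_p^\omega,C)$) that the paper leaves implicit.
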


\begin{proof} Let $p$ be the characteristic of the field $\ff_q$. The structure 
$(\ff_q^\omega,C , +)$, where we consider only the addition as an operation, is 
a reduct of the pointed vector space, and is
isomorphic to  the vector space $(\ff_p^\omega,C )$. The statement follows from Theorem~\ref{infred} and  Theorem~\ref{infredp}. 
\end{proof}

These results can also be shown using an analogue of the Reed--Muller
construction; we do not pursue this further.

\end{document}